\newtheorem{theorem}{Theorem}
\newtheorem{axiom}[theorem]{Axiom}
\newtheorem{conjecture}[theorem]{Conjecture}
\newtheorem{corollary}[theorem]{Corollary}
\newtheorem{example}[theorem]{Example}
\newtheorem{exercise}[theorem]{Exercise}
\newtheorem{definition}{Definition}
\newtheorem{lemma}[theorem]{Lemma}
\newtheorem{proposition}[theorem]{Proposition}
\newtheorem{remark}[theorem]{Remark}
\def\@begintheorem#1#2{\trivlist
   \item[\hskip \labelsep{\bfseries #1\ #2.}]\itshape}
\newenvironment{proof}[1][Proof]{\noindent\textbf{#1.} }{\ \rule{0.5em}{0.5em}}
\chardef\@x10\chardef\@xv60
\def\tcitime{
\def\@time{%
  \@minute\time\@hour\@minute\divide\@hour\@xv
  \ifnum\@hour<\@x 0\fi\the\@hour:%
  \multiply\@hour\@xv\advance\@minute-\@hour
  \ifnum\@minute<\@x 0\fi\the\@minute
  }}%
\def\QCTOpt[#1]#2{%
  \def\QCTOptB{#1}
  \def\QCTOptA{#2}
}
\def\QCTNOpt#1{%
  \def\QCTOptA{#1}
  \let\QCTOptB\empty
}
\def\Qct{%
  \@ifnextchar[{%
    \QCTOpt}{\QCTNOpt}
}
\def\QCBOpt[#1]#2{%
  \def\QCBOptB{#1}
  \def\QCBOptA{#2}
}
\def\QCBNOpt#1{%
  \def\QCBOptA{#1}
  \let\QCBOptB\empty
}
\def\Qcb{%
  \@ifnextchar[{%
    \QCBOpt}{\QCBNOpt}
}
\def\PrepCapArgs{%
  \ifx\QCBOptA\empty
    \ifx\QCTOptA\empty
      {}%
    \else
      \ifx\QCTOptB\empty
        {\QCTOptA}%
      \else
        [\QCTOptB]{\QCTOptA}%
      \fi
    \fi
  \else
    \ifx\QCBOptA\empty
      {}%
    \else
      \ifx\QCBOptB\empty
        {\QCBOptA}%
      \else
        [\QCBOptB]{\QCBOptA}%
      \fi
    \fi
  \fi
}
\def\GRAPHICSPS#1{%
 \ifcase\GRAPHICSTYPE
   \special{ps: #1}%
 \or
   \special{language "PS", include "#1"}%
 \fi
}%
\def\graffile#1#2#3#4{%
    \leavevmode
    \raise -#4 \BOXTHEFRAME{%
        \hbox to #2{\raise #3\hbox to #2{\null #1\hfil}}}%
}%
\def\draftbox#1#2#3#4{%
 \leavevmode\raise -#4 \hbox{%
  \frame{\rlap{\protect\tiny #1}\hbox to #2%
   {\vrule height#3 width\z@ depth\z@\hfil}%
  }%
 }%
}%
\newif\ifwasdraft
\def\GRAPHIC#1#2#3#4#5{%
 \ifnum\draft=\@ne\draftbox{#2}{#3}{#4}{#5}%
  \else\graffile{#1}{#3}{#4}{#5}%
  \fi
 }%
\def\addtoLaTeXparams#1{%
    \edef\LaTeXparams{\LaTeXparams #1}}%
\newif\ifBoxFrame \BoxFramefalse
\newif\ifOverFrame \OverFramefalse
\newif\ifUnderFrame \UnderFramefalse
\def\BOXTHEFRAME#1{%
   \hbox{%
      \ifBoxFrame
         \frame{#1}%
      \else
         {#1}%
      \fi
   }%
}
\def\doFRAMEparams#1{\BoxFramefalse\OverFramefalse\UnderFramefalse\readFRAMEparams#1\end}%
\def\readFRAMEparams#1{%
 \ifx#1\end%
  \let\next=\relax
  \else
  \ifx#1i\dispkind=\z@\fi
  \ifx#1d\dispkind=\@ne\fi
  \ifx#1f\dispkind=\tw@\fi
  \ifx#1t\addtoLaTeXparams{t}\fi
  \ifx#1b\addtoLaTeXparams{b}\fi
  \ifx#1p\addtoLaTeXparams{p}\fi
  \ifx#1h\addtoLaTeXparams{h}\fi
  \ifx#1X\BoxFrametrue\fi
  \ifx#1O\OverFrametrue\fi
  \ifx#1U\UnderFrametrue\fi
  \ifx#1w
    \ifnum\draft=1\wasdrafttrue\else\wasdraftfalse\fi
    \draft=\@ne
  \fi
  \let\next=\readFRAMEparams
  \fi
 \next
 }%
\def\IFRAME#1#2#3#4#5#6{%
      \bgroup
      \let\QCTOptA\empty
      \let\QCTOptB\empty
      \let\QCBOptA\empty
      \let\QCBOptB\empty
      #6%
      \parindent=0pt%
      \leftskip=0pt
      \rightskip=0pt
      \setbox0 = \hbox{\QCBOptA}%
      \@tempdima = #1\relax
      \ifOverFrame
          \typeout{This is not implemented yet}%
          \show\HELP
      \else
         \ifdim\wd0>\@tempdima
            \advance\@tempdima by \@tempdima
            \ifdim\wd0 >\@tempdima
               \textwidth=\@tempdima
               \setbox1 =\vbox{%
                  \noindent\hbox to \@tempdima{\hfill\GRAPHIC{#5}{#4}{#1}{#2}{#3}\hfill}\\%
                  \noindent\hbox to \@tempdima{\parbox[b]{\@tempdima}{\QCBOptA}}%
               }%
               \wd1=\@tempdima
            \else
               \textwidth=\wd0
               \setbox1 =\vbox{%
                 \noindent\hbox to \wd0{\hfill\GRAPHIC{#5}{#4}{#1}{#2}{#3}\hfill}\\%
                 \noindent\hbox{\QCBOptA}%
               }%
               \wd1=\wd0
            \fi
         \else
            \ifdim\wd0>0pt
              \hsize=\@tempdima
              \setbox1 =\vbox{%
                \unskip\GRAPHIC{#5}{#4}{#1}{#2}{0pt}%
                \break
                \unskip\hbox to \@tempdima{\hfill \QCBOptA\hfill}%
              }%
              \wd1=\@tempdima
           \else
              \hsize=\@tempdima
              \setbox1 =\vbox{%
                \unskip\GRAPHIC{#5}{#4}{#1}{#2}{0pt}%
              }%
              \wd1=\@tempdima
           \fi
         \fi
         \@tempdimb=\ht1
         \advance\@tempdimb by \dp1
         \advance\@tempdimb by -#2%
         \advance\@tempdimb by #3%
         \leavevmode
         \raise -\@tempdimb \hbox{\box1}%
      \fi
      \egroup%
}%
\def\DFRAME#1#2#3#4#5{%
 \begin{center}
     \let\QCTOptA\empty
     \let\QCTOptB\empty
     \let\QCBOptA\empty
     \let\QCBOptB\empty
     \ifOverFrame 
        #5\QCTOptA\par
     \fi
     \GRAPHIC{#4}{#3}{#1}{#2}{\z@}
     \ifUnderFrame 
        \nobreak\par #5\QCBOptA
     \fi
 \end{center}%
 }%
\def\FFRAME#1#2#3#4#5#6#7{%
 \begin{figure}[#1]%
  \let\QCTOptA\empty
  \let\QCTOptB\empty
  \let\QCBOptA\empty
  \let\QCBOptB\empty
  \ifOverFrame
    #4
    \ifx\QCTOptA\empty
    \else
      \ifx\QCTOptB\empty
        \caption{\QCTOptA}%
      \else
        \caption[\QCTOptB]{\QCTOptA}%
      \fi
    \fi
    \ifUnderFrame\else
      \label{#5}%
    \fi
  \else
    \UnderFrametrue%
  \fi
  \begin{center}\GRAPHIC{#7}{#6}{#2}{#3}{\z@}\end{center}%
  \ifUnderFrame
    #4
    \ifx\QCBOptA\empty
      \caption{}%
    \else
      \ifx\QCBOptB\empty
        \caption{\QCBOptA}%
      \else
        \caption[\QCBOptB]{\QCBOptA}%
      \fi
    \fi
    \label{#5}%
  \fi
  \end{figure}%
 }%
\def\makeactives{
  \catcode`\"=\active
  \catcode`\;=\active
  \catcode`\:=\active
  \catcode`\'=\active
  \catcode`\~=\active
}
   \gdef\activesoff{%
      \def"{\string"}
      \def;{\string;}
      \def:{\string:}
      \def'{\string'}
      \def~{\string~}
    }
\def\FRAME#1#2#3#4#5#6#7#8{%
 \bgroup
 \@ifundefined{bbl@deactivate}{}{\activesoff}
 \ifnum\draft=\@ne
   \wasdrafttrue
 \else
   \wasdraftfalse%
 \fi
 \def\LaTeXparams{}%
 \dispkind=\z@
 \def\LaTeXparams{}%
 \doFRAMEparams{#1}%
 \ifnum\dispkind=\z@\IFRAME{#2}{#3}{#4}{#7}{#8}{#5}\else
  \ifnum\dispkind=\@ne\DFRAME{#2}{#3}{#7}{#8}{#5}\else
   \ifnum\dispkind=\tw@
    \edef\@tempa{\noexpand\FFRAME{\LaTeXparams}}%
    \@tempa{#2}{#3}{#5}{#6}{#7}{#8}%
    \fi
   \fi
  \fi
  \ifwasdraft\draft=1\else\draft=0\fi{}%
  \egroup
 }%
\def\TEXUX#1{"texux"}
\long\def\QQQ#1#2{%
     \long\expandafter\def\csname#1\endcsname{#2}}%
\long\def\QQA#1#2{}%
\def\QTR#1#2{{\csname#1\endcsname #2}}
\def\EXPAND#1[#2]#3{}%
\def\NOEXPAND#1[#2]#3{}%
\def\LaTeXparent#1{}%
\def\ChildStyles#1{}%
\def\ChildDefaults#1{}%
\def\QTagDef#1#2#3{}%
\def\QQfnmark#1{\footnotemark}
\def\makeatletter\input gnuindex.sty\makeatother\makeindex{\makeatletter\input gnuindex.sty\makeatother\makeindex}%
\def\initial#1{\bigbreak{\raggedright\large\bf #1}\kern 2\p@\penalty3000}}%
 \def\abstract{%
  \if@twocolumn
   \section*{Abstract (Not appropriate in this style!)}%
   \else \small 
   \begin{center}{\bf Abstract\vspace{-.5em}\vspace{\z@}}\end{center}%
   \quotation 
   \fi
  }%
   \def\registered{\relax\ifmmode{}\r@gistered
                    \else$\m@th\r@gistered$\fi}%
 \def\r@gistered{^{\ooalign
  {\hfil\raise.07ex\hbox{$\scriptstyle\rm\text{R}$}\hfil\crcr
  \mathhexbox20D}}}}{}%
\newdimen\theight
\def\Column{%
 \vadjust{\setbox\z@=\hbox{\scriptsize\quad\quad tcol}%
  \theight=\ht\z@\advance\theight by \dp\z@\advance\theight by \lineskip
  \kern -\theight \vbox to \theight{%
   \rightline{\rlap{\box\z@}}%
   \vss
   }%
  }%
 }%
\def\qed{%
 \ifhmode\unskip\nobreak\fi\ifmmode\ifinner\else\hskip5\p@\fi\fi
 \hbox{\hskip5\p@\vrule width4\p@ height6\p@ depth1.5\p@\hskip\p@}%
 }%
\def\miss{\hbox{\vrule height2\p@ width 2\p@ depth\z@}}%
\def\tcol#1{{\baselineskip=6\p@ \vcenter{#1}} \Column}  %
\def\newfmtname{LaTeX2e}
\def\chkcompat{%
   \if@compatibility
   \else
     \usepackage{latexsym}
   \fi
}
  \DeclareOldFontCommand{\rm}{\normalfont\rmfamily}{\mathrm}
  \DeclareOldFontCommand{\sf}{\normalfont\sffamily}{\mathsf}
  \DeclareOldFontCommand{\tt}{\normalfont\ttfamily}{\mathtt}
  \DeclareOldFontCommand{\bf}{\normalfont\bfseries}{\mathbf}
  \DeclareOldFontCommand{\it}{\normalfont\itshape}{\mathit}
  \DeclareOldFontCommand{\sl}{\normalfont\slshape}{\@nomath\sl}
  \DeclareOldFontCommand{\sc}{\normalfont\scshape}{\@nomath\sc}
\def\alpha{\Greekmath 010B }%
\def\beta{\Greekmath 010C }%
\def\gamma{\Greekmath 010D }%
\def\delta{\Greekmath 010E }%
\def\epsilon{\Greekmath 010F }%
\def\zeta{\Greekmath 0110 }%
\def\eta{\Greekmath 0111 }%
\def\theta{\Greekmath 0112 }%
\def\iota{\Greekmath 0113 }%
\def\kappa{\Greekmath 0114 }%
\def\lambda{\Greekmath 0115 }%
\def\mu{\Greekmath 0116 }%
\def\nu{\Greekmath 0117 }%
\def\xi{\Greekmath 0118 }%
\def\pi{\Greekmath 0119 }%
\def\rho{\Greekmath 011A }%
\def\sigma{\Greekmath 011B }%
\def\tau{\Greekmath 011C }%
\def\upsilon{\Greekmath 011D }%
\def\phi{\Greekmath 011E }%
\def\chi{\Greekmath 011F }%
\def\psi{\Greekmath 0120 }%
\def\omega{\Greekmath 0121 }%
\def\varepsilon{\Greekmath 0122 }%
\def\vartheta{\Greekmath 0123 }%
\def\varpi{\Greekmath 0124 }%
\def\varrho{\Greekmath 0125 }%
\def\varsigma{\Greekmath 0126 }%
\def\varphi{\Greekmath 0127 }%
\def\nabla{\Greekmath 0272 }
\def\FindBoldGroup{%
   {\setbox0=\hbox{$\mathbf{x\global\edef\theboldgroup{\the\mathgroup}}$}}%
}
\def\Greekmath#1#2#3#4{%
    \if@compatibility
        \ifnum\mathgroup=\symbold
           \mathchoice{\mbox{\boldmath$\displaystyle\mathchar"#1#2#3#4$}}%
                      {\mbox{\boldmath$\textstyle\mathchar"#1#2#3#4$}}%
                      {\mbox{\boldmath$\scriptstyle\mathchar"#1#2#3#4$}}%
                      {\mbox{\boldmath$\scriptscriptstyle\mathchar"#1#2#3#4$}}%
        \else
           \mathchar"#1#2#3#4%
        \fi 
    \else 
        \FindBoldGroup
        \ifnum\mathgroup=\theboldgroup 
           \mathchoice{\mbox{\boldmath$\displaystyle\mathchar"#1#2#3#4$}}%
                      {\mbox{\boldmath$\textstyle\mathchar"#1#2#3#4$}}%
                      {\mbox{\boldmath$\scriptstyle\mathchar"#1#2#3#4$}}%
                      {\mbox{\boldmath$\scriptscriptstyle\mathchar"#1#2#3#4$}}%
        \else
           \mathchar"#1#2#3#4%
        \fi     	    
	  \fi}
\newif\ifGreekBold  \GreekBoldfalse
\let\SAVEPBF=\pbf
\def\pbf{\GreekBoldtrue\SAVEPBF}%
  \newcounter{equationnumber}  
  \def\mathletters{%
     \addtocounter{equation}{1}
     \edef\@currentlabel{\theequation}%
     \setcounter{equationnumber}{\c@equation}
     \setcounter{equation}{0}%
     \edef\theequation{\@currentlabel\noexpand\alph{equation}}%
  }
    \def\BibTeX{{\rm B\kern-.05em{\sc i\kern-.025em b}\kern-.08em
                 T\kern-.1667em\lower.7ex\hbox{E}\kern-.125emX}}}{}%
\def\AmS{{\protect\usefont{OMS}{cmsy}{m}{n}%
                A\kern-.1667em\lower.5ex\hbox{M}\kern-.125emS}}}{}%
\let\DOTSI\relax
\def\RIfM@{\relax\ifmmode}%
\def\FN@{\futurelet\next}%
\def\iint{\DOTSI\intno@\tw@\FN@\ints@}%
\def\iiint{\DOTSI\intno@\thr@@\FN@\ints@}%
\def\iiiint{\DOTSI\intno@4 \FN@\ints@}%
\def\idotsint{\DOTSI\intno@\z@\FN@\ints@}%
\def\ints@{\findlimits@\ints@@}%
\newif\iflimtoken@
\newif\iflimits@
\def\findlimits@{\limtoken@true\ifx\next\limits\limits@true
 \else\ifx\next\nolimits\limits@false\else
 \limtoken@false\ifx\ilimits@\nolimits\limits@false\else
 \ifinner\limits@false\else\limits@true\fi\fi\fi\fi}%
\def\multint@{\int\ifnum\intno@=\z@\intdots@                          
 \else\intkern@\fi                                                    
 \ifnum\intno@>\tw@\int\intkern@\fi                                   
 \ifnum\intno@>\thr@@\int\intkern@\fi                                 
 \int}
\def\multintlimits@{\intop\ifnum\intno@=\z@\intdots@\else\intkern@\fi
 \ifnum\intno@>\tw@\intop\intkern@\fi
 \ifnum\intno@>\thr@@\intop\intkern@\fi\intop}%
\def\intic@{%
    \mathchoice{\hskip.5em}{\hskip.4em}{\hskip.4em}{\hskip.4em}}%
\def\negintic@{\mathchoice
 {\hskip-.5em}{\hskip-.4em}{\hskip-.4em}{\hskip-.4em}}%
\def\ints@@{\iflimtoken@                                              
 \def\ints@@@{\iflimits@\negintic@
   \mathop{\intic@\multintlimits@}\limits                             
  \else\multint@\nolimits\fi                                          
  \eat@}
 \else                                                                
 \def\ints@@@{\iflimits@\negintic@
  \mathop{\intic@\multintlimits@}\limits\else
  \multint@\nolimits\fi}\fi\ints@@@}%
\def\intkern@{\mathchoice{\!\!\!}{\!\!}{\!\!}{\!\!}}%
\def\plaincdots@{\mathinner{\cdotp\cdotp\cdotp}}%
\def\intdots@{\mathchoice{\plaincdots@}%
 {{\cdotp}\mkern1.5mu{\cdotp}\mkern1.5mu{\cdotp}}%
 {{\cdotp}\mkern1mu{\cdotp}\mkern1mu{\cdotp}}%
 {{\cdotp}\mkern1mu{\cdotp}\mkern1mu{\cdotp}}}%
\def\RIfM@{\relax\protect\ifmmode}
\def\text{\RIfM@\expandafter\text@\else\expandafter\mbox\fi}
\let\nfss@text\text
\def\text@#1{\mathchoice
   {\textdef@\displaystyle\f@size{#1}}%
   {\textdef@\textstyle\tf@size{\firstchoice@false #1}}%
   {\textdef@\textstyle\sf@size{\firstchoice@false #1}}%
   {\textdef@\textstyle \ssf@size{\firstchoice@false #1}}%
   \glb@settings}
\def\textdef@#1#2#3{\hbox{{%
                    \everymath{#1}%
                    \let\f@size#2\selectfont
                    #3}}}
\newif\iffirstchoice@
\def\Let@{\relax\iffalse{\fi\let\\=\cr\iffalse}\fi}%
\def\vspace@{\def\vspace##1{\crcr\noalign{\vskip##1\relax}}}%
\def\multilimits@{\bgroup\vspace@\Let@
 \baselineskip\fontdimen10 \scriptfont\tw@
 \advance\baselineskip\fontdimen12 \scriptfont\tw@
 \lineskip\thr@@\fontdimen8 \scriptfont\thr@@
 \lineskiplimit\lineskip
 \vbox\bgroup\ialign\bgroup\hfil$\m@th\scriptstyle{##}$\hfil\crcr}%
\def\Sb{_\multilimits@}%
\def\endSb{\crcr\egroup\egroup\egroup}%
\def\Sp{^\multilimits@}%
\newdimen\ex@
\def\rightarrowfill@#1{$#1\m@th\mathord-\mkern-6mu\cleaders
 \hbox{$#1\mkern-2mu\mathord-\mkern-2mu$}\hfill
 \mkern-6mu\mathord\rightarrow$}%
\def\leftarrowfill@#1{$#1\m@th\mathord\leftarrow\mkern-6mu\cleaders
 \hbox{$#1\mkern-2mu\mathord-\mkern-2mu$}\hfill\mkern-6mu\mathord-$}%
\def\leftrightarrowfill@#1{$#1\m@th\mathord\leftarrow
\mkern-6mu\cleaders
 \hbox{$#1\mkern-2mu\mathord-\mkern-2mu$}\hfill
 \mkern-6mu\mathord\rightarrow$}%
\def\overrightarrow{\mathpalette\overrightarrow@}%
\def\overrightarrow@#1#2{\vbox{\ialign{##\crcr\rightarrowfill@#1\crcr
 \noalign{\kern-\ex@\nointerlineskip}$\m@th\hfil#1#2\hfil$\crcr}}}%
\def\overleftarrow{\mathpalette\overleftarrow@}%
\def\overleftarrow@#1#2{\vbox{\ialign{##\crcr\leftarrowfill@#1\crcr
 \noalign{\kern-\ex@\nointerlineskip}$\m@th\hfil#1#2\hfil$\crcr}}}%
\def\overleftrightarrow{\mathpalette\overleftrightarrow@}%
\def\overleftrightarrow@#1#2{\vbox{\ialign{##\crcr
   \leftrightarrowfill@#1\crcr
 \noalign{\kern-\ex@\nointerlineskip}$\m@th\hfil#1#2\hfil$\crcr}}}%
\def\underrightarrow{\mathpalette\underrightarrow@}%
\def\underrightarrow@#1#2{\vtop{\ialign{##\crcr$\m@th\hfil#1#2\hfil
  $\crcr\noalign{\nointerlineskip}\rightarrowfill@#1\crcr}}}%
\def\underleftarrow{\mathpalette\underleftarrow@}%
\def\underleftarrow@#1#2{\vtop{\ialign{##\crcr$\m@th\hfil#1#2\hfil
  $\crcr\noalign{\nointerlineskip}\leftarrowfill@#1\crcr}}}%
\def\underleftrightarrow{\mathpalette\underleftrightarrow@}%
\def\underleftrightarrow@#1#2{\vtop{\ialign{##\crcr$\m@th
  \hfil#1#2\hfil$\crcr
 \noalign{\nointerlineskip}\leftrightarrowfill@#1\crcr}}}%
\def\qopnamewl@#1{\mathop{\operator@font#1}\nlimits@}
\let\nlimits@\displaylimits
\def\setboxz@h{\setbox\z@\hbox}
\def\varlim@#1#2{\mathop{\vtop{\ialign{##\crcr
 \hfil$#1\m@th\operator@font lim$\hfil\crcr
 \noalign{\nointerlineskip}#2#1\crcr
 \noalign{\nointerlineskip\kern-\ex@}\crcr}}}}
 \def\rightarrowfill@#1{\m@th\setboxz@h{$#1-$}\ht\z@\z@
  $#1\copy\z@\mkern-6mu\cleaders
  \hbox{$#1\mkern-2mu\box\z@\mkern-2mu$}\hfill
  \mkern-6mu\mathord\rightarrow$}
\def\leftarrowfill@#1{\m@th\setboxz@h{$#1-$}\ht\z@\z@
  $#1\mathord\leftarrow\mkern-6mu\cleaders
  \hbox{$#1\mkern-2mu\copy\z@\mkern-2mu$}\hfill
  \mkern-6mu\box\z@$}
\def\projlim{\qopnamewl@{proj\,lim}}
\def\injlim{\qopnamewl@{inj\,lim}}
\def\varinjlim{\mathpalette\varlim@\rightarrowfill@}
\def\varprojlim{\mathpalette\varlim@\leftarrowfill@}
\def\varliminf{\mathpalette\varliminf@{}}
\def\varliminf@#1{\mathop{\underline{\vrule\@depth.2\ex@\@width\z@
   \hbox{$#1\m@th\operator@font lim$}}}}
\def\varlimsup{\mathpalette\varlimsup@{}}
\def\varlimsup@#1{\mathop{\overline
  {\hbox{$#1\m@th\operator@font lim$}}}}
\def\align{\@verbatim \frenchspacing\@vobeyspaces \@alignverbatim
You are using the "align" environment in a style in which it is not defined.}
\let\csname endalign*\endcsname =\endtrivlist
\def\alignat{\@verbatim \frenchspacing\@vobeyspaces \@alignatverbatim
You are using the "alignat" environment in a style in which it is not defined.}
\let\csname endalignat*\endcsname =\endtrivlist
\def\xalignat{\@verbatim \frenchspacing\@vobeyspaces \@xalignatverbatim
You are using the "xalignat" environment in a style in which it is not defined.}
\let\csname endxalignat*\endcsname =\endtrivlist
\def\gather{\@verbatim \frenchspacing\@vobeyspaces \@gatherverbatim
You are using the "gather" environment in a style in which it is not defined.}
\let\csname endgather*\endcsname =\endtrivlist
\def\multiline{\@verbatim \frenchspacing\@vobeyspaces \@multilineverbatim
You are using the "multiline" environment in a style in which it is not defined.}
\let\csname endmultiline*\endcsname =\endtrivlist
\def\arrax{\@verbatim \frenchspacing\@vobeyspaces \@arraxverbatim
You are using a type of "array" construct that is only allowed in AmS-LaTeX.}
\def\tabulax{\@verbatim \frenchspacing\@vobeyspaces \@tabulaxverbatim
You are using a type of "tabular" construct that is only allowed in AmS-LaTeX.}
\let\csname endarrax*\endcsname =\endtrivlist
\let\csname endtabulax*\endcsname =\endtrivlist
\def\@@eqncr{\let\@tempa\relax
    \ifcase\@eqcnt \def\@tempa{& & &}\or \def\@tempa{& &}%
      \else \def\@tempa{&}\fi
     \@tempa
     \if@eqnsw
        \iftag@
           \@taggnum
        \else
           \@eqnnum\stepcounter{equation}%
        \fi
     \fi
     \global\tag@false
     \global\@eqnswtrue
     \global\@eqcnt\z@\cr}
 \def\endequation{%
     \ifmmode\ifinner 
      \iftag@
        \addtocounter{equation}{-1} 
        $\hfil
           \displaywidth\linewidth\@taggnum\egroup \endtrivlist
        \global\tag@false
        \global\@ignoretrue   
      \else
        $\hfil
           \displaywidth\linewidth\@eqnnum\egroup \endtrivlist
        \global\tag@false
        \global\@ignoretrue 
      \fi
     \else   
      \iftag@
        \addtocounter{equation}{-1} 
        \eqno \hbox{\@taggnum}
        \global\tag@false%
        $$\global\@ignoretrue
      \else
        \eqno \hbox{\@eqnnum}
        $$\global\@ignoretrue
      \fi
     \fi\fi
 } 
 \newif\iftag@ \tag@false
 \def\tag{\@ifnextchar*{\@tagstar}{\@tag}}
 \def\@tag#1{%
     \global\tag@true
     \global\def\@taggnum{(#1)}}
 \def\@tagstar*#1{%
     \global\tag@true
     \global\def\@taggnum{#1}%
}
\def\figref#1{\textsc{Figure} \ref{fig:#1}}
\def\figs#1#2#3{%
	\begin{figure}[htb]%
		\centerline{\includegraphics[scale=#3]{#1.eps}}%
		\caption{#2}%
		\label{fig:#1}%
	\end{figure}
}
\def\figstwo#1#2#3#4#5#6{%
	\begin{figure}[htb] \centering%
		\begin{tabular}{cc}
			\includegraphics[scale=#3]{#1a.eps} & \includegraphics[scale=#5]{#1b.eps}
			\\ 
			(a) \ #2 \vspace{0.15in} & (b) \ #4
		\end{tabular}%
		\caption{#6}\label{fig:#1}
	\end{figure}
}
\begin{document}

\title{The $\Lambda $-property of a simple arc.}
\author{J. Ralph Alexander, John E. Wetzel\thanks{%
Corresponding author.}, and Wacharin Wichiramala}
\date{}
\maketitle

\begin{abstract}
In 2006 P. Coulton and Y. Movshovich established an unfamilar but noteworthy
general property of simple, polygonal, open arcs in the plane. \ We give a
new and quite different proof of this property, and we consider a few
generalizations.\medskip

AMS 2010 subject classification: \ Primary: 51N05; secondary: 51M99.

Keywords and phrases: \ simple arcs, open arcs, polygonal arcs, support
lines for arcs, Coulton-Movshovich $\boldsymbol{\Lambda }$ property.
\end{abstract}

\vspace{0.25in}

We give a new proof of a property of open simple arcs proved for polygonal arcs in 2006 by P. Coulton and Y. Movshovich \cite[Th. 5.1]{Coulton Movshovish 2006}:

\begin{theorem}
\textbf{\ }\label{mov th}\textbf{\ }Every simple open arc $\gamma $\ (not a
line segment) has two parallel support lines with the property that the arc
touches one line at two different points $u$ and $w$\ and the other line at
a point $v$ that lies between $u$\ and $w$\ in the parametric ordering of
the points of $\gamma $ (\figref{fig1}).
\end{theorem}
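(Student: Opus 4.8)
\emph{Plan.} The idea is to recast the conclusion as a statement about the parameter sets at which $\gamma $ meets a pair of parallel support lines, and then to find a direction in which these two sets ``interleave.'' For a unit vector $e_{\theta }=(\cos \theta ,\sin \theta )$ with $\theta \in \lbrack 0,\pi ]$, put $g_{\theta }(t)=\langle \gamma (t),e_{\theta }\rangle $ and let $A^{+}(\theta ),A^{-}(\theta )\subseteq \lbrack 0,1]$ be the (nonempty, compact) sets of parameters at which $g_{\theta }$ attains its maximum, respectively its minimum. These are precisely the sets of parameters at which $\gamma $ touches the two support lines perpendicular to $e_{\theta }$, which are parallel and distinct. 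Since $\gamma $ is not a line segment it is contained in no line, so each $g_{\theta }$ is nonconstant and $A^{+}(\theta )\cap A^{-}(\theta )=\emptyset $. The key elementary observation is that if for some $\theta $ the disjoint sets $A^{+}(\theta )$ and $A^{-}(\theta )$ are \emph{not} linearly separated in $[0,1]$ (neither lies entirely to the left of the other), then one of them contains parameters $t_{1}<t_{3}$ with a parameter $t_{2}$ of the other lying strictly between; taking $u=\gamma (t_{1})$, $w=\gamma (t_{3})$ (distinct, by simplicity of $\gamma $) on one support line and $v=\gamma (t_{2})$ on the other gives exactly the asserted configuration. So it suffices to exhibit one such direction.

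To do that I would argue by contradiction. First record the relevant semicontinuity: by joint continuity of $(\theta ,t)\mapsto g_{\theta }(t)$ and compactness of $[0,1]$, the set-valued map $\theta \mapsto A^{\pm }(\theta )$ is upper semicontinuous, hence $\theta \mapsto \min A^{\pm }(\theta )$ is lower semicontinuous and $\theta \mapsto \max A^{\pm }(\theta )$ is upper semicontinuous. Now suppose no direction works. Then for every $\theta $ exactly one of the two ``separated'' states holds: $L(\theta )$, meaning $\max A^{+}(\theta )<\min A^{-}(\theta )$, or $R(\theta )$, meaning $\max A^{-}(\theta )<\min A^{+}(\theta )$. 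A short $\varepsilon $-estimate using the semicontinuity above shows that $\{\theta :L(\theta )\}$ and $\{\theta :R(\theta )\}$ are each closed in $[0,\pi ]$: if $L$ held along a sequence $\theta _{n}\to \theta $ while $R(\theta )$ held, then lower semicontinuity of $\min A^{+}$ and upper semicontinuity of $\max A^{-}$ would force $\max A^{-}(\theta _{n})<\min A^{+}(\theta _{n})$ for large $n$, contradicting $L(\theta _{n})$; symmetrically for the other set.

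Finally, because $e_{\pi }=-e_{0}$ we have $A^{+}(\pi )=A^{-}(0)$ and $A^{-}(\pi )=A^{+}(0)$, so whichever of $L,R$ holds at $\theta =0$, the opposite one holds at $\theta =\pi $. Thus $[0,\pi ]$ would be the disjoint union of two nonempty closed sets, contradicting connectedness. Hence some $\theta $ is not separated, and the configuration of the theorem occurs.

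I expect the main obstacle to be exactly that the contact sets $A^{\pm }(\theta )$ vary only semicontinuously, not continuously --- a two-point contact \emph{is} a jump of $A^{\pm }$ --- so one cannot follow a single contact point continuously and invoke the intermediate value theorem directly. The device that makes it work is to carry along the entire contact sets, encode the failure of the theorem as the two-valued dichotomy $L/R$, and prove that dichotomy is locally constant using lower/upper semicontinuity of $\min A^{\pm }$ and $\max A^{\pm }$ rather than continuity of any individual point. A minor point to dispatch along the way is that ``not a line segment'' forces ``not contained in a line'' (so that $A^{+}(\theta )\cap A^{-}(\theta )=\emptyset $ for every $\theta $), which is the only place the hypothesis enters.
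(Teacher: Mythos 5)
Your argument is correct, and it takes a genuinely different route from the paper's. The paper first treats polygonal arcs combinatorially: it forms the convex hull $\mathfrak{K}$, the guide path through its vertices in the order the arc visits them, and the resulting locales, and it encodes the tilt and span angles in a schematic diagram in which a decreasing piecewise-linear path $\Upsilon$ (support angles along the upper boundary) and an increasing one $\Phi$ (lower boundary) must cross; the crossing locale supplies the line $m$ through $u$ and $w$ and the parallel contact $v$ on the opposite cap, and the general statement is then obtained by approximating a rectifiable arc by polygonal ones. You instead work with the arc directly: for each direction $\theta$ you take the two extreme support lines and their compact contact parameter sets $A^{\pm}(\theta)$, note that failure of the $\Lambda$-configuration in direction $\theta$ forces these disjoint sets to be linearly separated in $[0,1]$, and show the resulting two-valued dichotomy $L/R$ is closed (outer semicontinuity of $\theta\mapsto A^{\pm}(\theta)$ gives lower semicontinuity of $\min A^{\pm}$ and upper semicontinuity of $\max A^{\pm}$) yet must swap between $\theta=0$ and $\theta=\pi$ since $A^{\pm}(\pi)=A^{\mp}(0)$, contradicting connectedness of $[0,\pi]$. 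Your reduction ``not separated $\Rightarrow$ interleaved'' is sound: if neither set met the open interval spanned by the other, disjointness would force both $\min A^{-}<\min A^{+}$ and $\min A^{+}<\min A^{-}$; and your remark that ``not a line segment'' yields ``not contained in a line,'' hence $A^{+}(\theta)\cap A^{-}(\theta)=\emptyset$ for every $\theta$, is exactly where the hypothesis enters. What your approach buys: no polygonal reduction and no approximation step (rectifiability is never used), with the genuine discontinuity of individual contact points handled by carrying the whole contact sets rather than a chosen contact point. What the paper's schematic buys in exchange: for polygonal arcs it locates the crossing in a unique locale (the uniqueness statement of its Lemma 4), and the same diagram, augmented by the difference $\Upsilon-\Phi$, produces the quantitative counts of support-line pairs at a prescribed angle in the paper's final theorem --- information a pure connectedness argument does not by itself provide.
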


\figs{fig1}{The $\Lambda $-property.}{0.3}

\noindent They called this the $\QTR{sl}{\Lambda }$\emph{-property} of the
arc.

In the concluding section we extend this theorem to the case in which the
support lines are to form a given angle, and we consider a few related
results.

\section{The parallel case.}

Throughout, by an \emph{open} \emph{simple arc} $\gamma $ we mean the
one-to-one continuous rectifiable image of the real interval $[0,1]$ with $%
\gamma (0)\neq \gamma (1).$ \ For simplicity of language we do not
distinguish notationally between the mapping function $\gamma $ and the
pointset that is its range.

Our proof of Theorem \ref{mov th} begins with a new proof for the case of
polygonal arcs. \ The result for arbitrary arcs then follows from classical
approximation reasoning.

\subsection{Initial considerations.
}

Let $\pi =\langle p_{1}p_{2}\cdots p_{P}\rangle $ be an open, simple,
oriented, polygonal arc, which we suppose is not convex and, in particular,
is not a line segment. \ The points $p_{i}$ are the \emph{nodes} of $\pi $.
\ The closed convex hull $\mathfrak{K}$ of $\pi $ is a polygon. \ Let $%
\partial \mathfrak{K}$ be its boundary, and $\mathcal{V}$ its set of
vertices (a \emph{vertex} of $\mathfrak{K}$ is a point of $\mathfrak{K}$
that does not lie between any two points of $\mathfrak{K}$). \ If $\mathfrak{%
K}$ is a triangle, then one properly chosen side and the parallel line
through the opposite vertex, together with the three suitably labeled
vertices, satisfy the conditions of the Theorem. \ Consequently we may
assume from the start that $\mathfrak{K}$ has $n\geq 4$ vertices.

The segments that connect consecutive vertices on $\partial \mathfrak{K}$
are the \emph{edges} of $\mathfrak{K}.$ \ Each vertex of $\mathfrak{K}$ is a
node of $\pi ,$ but not necessarily conversely (\figref{fig2}). \ 

\figstwo{fig2}{A path $\pi .$}{0.5}{The convex hull $\mathfrak{K}$
	of $\pi .$}{0.7}{The given path $\pi $ and its convex hull $\mathfrak{K}$.}

A support line of $\mathfrak{K}$ can touch $\mathfrak{K}$ only at a vertex
or along an edge.

\subsubsection{Polygonal paths in $\mathfrak{K.}$}

We are interested in polygonal paths that capture the convexity properties
of $\mathfrak{K}$.

\begin{definition}
A polygonal path $\varpi =\langle w_{1}w_{2}\cdots w_{n}\rangle $ is \emph{%
admissible} if \emph{(1)} it lies in $\mathfrak{K},$ \emph{(2)} it is
simple, and \emph{(3)} the set $\left\{ w_{1},w_{2},\cdots ,w_{n}\right\} $
of its nodes is equal to the vertex set $\mathcal{V}$ of $\mathfrak{K}$.
\end{definition}

To generate such a path, select a vertex of $\mathfrak{K}$ to be the initial
node. \ Once any node but the penultimate one has been located at a vertex
of $\mathfrak{K}$, there are precisely two vertices available for the next
node, because the path must be simple and it must visit each vertex of $%
\mathfrak{K}$ exactly once. \ When the penultimate node has been placed,
just one vertex remains for the last node. \ Consequently:

\begin{lemma}
A strictly convex $n$-gon $\mathfrak{K}$ has $n2^{n-2}$ admissible polygonal
paths each of which has $n$ nodes, and $\mathfrak{K}$ is the convex hull of
each.
\end{lemma}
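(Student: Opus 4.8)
Because $\mathfrak{K}$ is a strictly convex $n$-gon, its vertex set $\mathcal{V}=\{v_{1},\dots ,v_{n}\}$, listed in cyclic order along $\partial\mathfrak{K}$, has exactly $n$ points, no three of them collinear. An admissible path $\varpi$ has node set $\mathcal{V}$ by definition, so it has $n$ distinct nodes and its convex hull equals $\mathrm{conv}\,\mathcal{V}=\mathfrak{K}$; thus the last two assertions of the lemma are immediate, and the content is the count $n\,2^{n-2}$. The plan is to make the informal counting argument preceding the lemma rigorous by first establishing the following structural fact: if $\varpi=\langle w_{1}w_{2}\cdots w_{n}\rangle$ is admissible, then for each $k$ with $1\le k\le n-1$ the nodes $w_{1},\dots ,w_{k}$ occupy a contiguous arc of $\partial\mathfrak{K}$, and $w_{k}$ is one of the two end vertices of that arc.

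I would prove this fact by induction on $k$, the case $k=1$ being immediate. For the inductive step, assume $\{w_{1},\dots ,w_{k}\}$ is a contiguous arc $A$ with $w_{k}$ an endpoint and $k\le n-2$; then $U=\mathcal{V}\setminus A$ is a contiguous arc of $n-k\ge 2$ vertices, with one end vertex $a$ adjacent along $\partial\mathfrak{K}$ to $w_{k}$ and the other end vertex $b$ adjacent to the far end of $A$. If $w_{k+1}\in\{a,b\}$, then $\{w_{1},\dots ,w_{k+1}\}$ is again a contiguous arc with $w_{k+1}$ at one end, as needed. If instead $w_{k+1}=c\in U$ with $c\ne a$ and $c\ne b$, then $c$ lies strictly between the two ends of $U$, so the chord $w_{k}c$ separates $\mathfrak{K}$ into two pieces, each containing a nonempty subset of $U\setminus\{c\}$; since $w_{k+2},\dots ,w_{n}$ enumerates exactly $U\setminus\{c\}$ and meets both pieces, some later edge $w_{i}w_{i+1}$ with $i\ge k+2$ has its two endpoints on opposite sides of this chord and hence crosses $w_{k}w_{k+1}$ in the relative interior of both segments. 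As $w_{i}w_{i+1}$ and $w_{k}w_{k+1}$ are non-adjacent edges of $\varpi$, this contradicts simplicity, so $w_{k+1}\in\{a,b\}$, completing the induction. I expect this crossing argument to be the crux: one must check that the two groups of still-unvisited vertices are each nonempty, that they genuinely lie on opposite sides of the chord $w_{k}c$, that the transition edge has index $\ge k+2$ (so it is non-adjacent to $w_{k}w_{k+1}$), and that the intersection is a \emph{proper} crossing, the last using that no three vertices of $\mathfrak{K}$ are collinear.

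Granting the structural fact, an admissible path is built exactly as described before the lemma: $w_{1}$ may be any of the $n$ vertices; once $w_{1},\dots ,w_{k}$ have been placed for $1\le k\le n-2$, the only candidates for $w_{k+1}$ are the two flanking vertices $a,b$; and $w_{n}$ is then forced. To finish, I would verify the converse, that every such sequence of choices really does produce an admissible (non-crossing) path. Writing $A_{j}=\{w_{1},\dots ,w_{j}\}$, the new edge $w_{j}w_{j+1}$ is either a side of $\mathfrak{K}$, when $w_{j+1}$ is the vertex of $\mathfrak{K}$ adjacent to $w_{j}$, and then meets no chord in an interior point; or else a chord closing off the arc $A_{j+1}$, in which case every earlier non-adjacent edge $w_{i}w_{i+1}$ ($i\le j-2$) has both endpoints strictly inside $A_{j+1}$, hence strictly on one side of $w_{j}w_{j+1}$, so again there is no crossing (and adjacent edges of a polygonal path never cross). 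Since distinct choice sequences obviously give distinct paths, the structural fact together with this converse exhibits a bijection between the admissible paths of $\mathfrak{K}$ and the choice sequences, of which there are $n\cdot 2\cdots 2\cdot 1=n\,2^{n-2}$, the factor $2$ occurring for each of $k=1,\dots ,n-2$. This establishes the lemma.
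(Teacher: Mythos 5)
Your proposal is correct and follows essentially the same route as the paper, whose proof is just the informal generation argument: choose a starting vertex ($n$ ways), observe that at every step before the penultimate one exactly two vertices are available, and note the last vertex is forced. What you add — the inductive contiguity fact with the chord-crossing contradiction, and the converse check that every choice sequence really yields a simple path — is a sound rigorization of the two claims the paper asserts without proof, not a different method.
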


\subsubsection{The guide path $\protect\varpi $ for $\protect\pi .$}

Among all admissible paths there is just one that visits the vertices of $%
\mathfrak{K}$ in the same sequence that they are visited by the given arc $%
\pi .$ \ We call it the \emph{guide path of} $\pi ,$ and we reserve for it
the notation $\varpi =\langle w_{1}w_{2}\cdots w_{n}\rangle $. \ The guide
path of $\pi $ serves as a \textquotedblleft canonical
form\textquotedblright\ for $\pi .$ \ It is open and simple, and it clearly
is the shortest arc that spans the convex hull of $\pi $ and touches its
vertices in the same sequential order as does $\pi .$

We call $w_{1}$ the \emph{head} and $w_{n}$ the \emph{tail of }$\mathfrak{K}$%
\emph{,} and recalling that $n\geq 4$ we introduce the notation $H=w_{1},$ $%
X=w_{2},$ $Y=w_{n-1},$ and $T=w_{n}.$

We call the oriented line segment $HT$ the \emph{axis} $\xi $ of $\mathfrak{%
K.}$ \ If the axis $\xi $ lies on the boundary of $\mathfrak{K,}$ then $H$
and $T$ are adjacent vertices, and the choices $m$ for the line $HT,$ $n$
the support line of $\mathfrak{K}$ parallel to $m,$ $u=H,$ $w=T,$ and $v$ a
vertex in which $n$ meets $\mathfrak{K}$ show that $\pi $ satisfies the
theorem; so we examine the case in which the axis $\xi $ meets the interior
of $\mathfrak{K.}$ \ Then the points $H$ and $T$ divide the boundary of $%
\mathfrak{K}$ into two convex arcs, one of which passes through the vertex $%
X $. \ We call that arc the \emph{upper boundary }of $\mathfrak{K}$ and the
other arc the \emph{lower boundary. \ }The upper and lower boundary arcs lie
on opposite sides of the axis of $\mathfrak{K.}$ \ We orient each edge $%
[v_{i},v_{i+1}]$ of $\partial \mathfrak{K}$ in the sense from $H$ to $T.$ \ 

The oriented segments $[w_{i},w_{i+1}]$ are the \emph{links} of $\varpi .$ \
These links are of two kinds: \ those that lie on $\partial \mathfrak{K}$
and those that cross $\mathrm{int}(\mathfrak{K})$ and meet the axis $\xi 
\mathfrak{.}$ \ If a link of the guide path $\varpi $ lies on the boundary $%
\partial \mathfrak{K},$ we call it an \emph{edge} link. \ It coincides with
an edge of $\mathfrak{K}$. \ A link of $\varpi $ that is not an edge link is
a \emph{crossing} link.

\subparagraph*{Exemplar.}

It is our intention to follow the general argument using the specific arc
pictured in \figref{fig2}(a) as an exemplar, and for this
purpose we adopt the notations $\pi _{0}$ for this particular arc and $%
\mathfrak{K}_{0}$ for its convex hull, pictured in \figref{fig2}(b). \ Evidently $\pi _{0}$ has $P=60$ nodes, and its convex hull $%
\mathfrak{K}_{0}$ has $16$ vertices. \ The guide path $\varpi _{0}$ for the
exemplar $\pi _{0}$ is shown bold in \figref{fig3}.

\subsubsection{Locales of $\protect\pi .$}

The guide path $\varpi $ partitions the convex hull $\mathfrak{K}$ into $J$
closed convex subregions $\mathfrak{L}_{j}$ called \emph{locales, }indexed $%
\mathfrak{L}_{1},$ $\mathfrak{L}_{2},$ $\dots $ $\mathfrak{L}_{J}$ in order
from $H$ to $T$\ \ Each locale $\mathfrak{L}_{\ell }$ has an edge link $%
B_{j} $ as its base, one or two crossing links as its sides, and a chain $%
C_{j}$ of edge links as its cap. \ The cap $C_{1}$ lies on the upper
boundary of $\mathfrak{K}$ and the base $B_{1}$ on the lower boundary, and
for increasing index the locations alternate, with caps on the upper
boundary and bases on the lower when $j$ is even, and caps on the lower and
bases on the upper when $j$ is odd. \ The situation at $T,$ when $j=J,$
depends on the parity of $J.$

\figs{fig3}{The guide path $\protect\varpi %
	_{0}$ and locales of $\protect\pi _{0}.$}{0.7}

\subparagraph*{Exemplar.}

\figref{fig3} shows the guide path $\varpi _{0}$ of the
exemplar $\pi _{0}$ and its associated six locales. \ Note that like $C_{4},$
the cap of a locale can be a single point.

\subsubsection{The angles of $\mathfrak{K}$.}

Two important angles, \emph{tilt} and \emph{span, }are associated with each
locale of $\varpi .$\emph{\ \ }In describing them we write $\measuredangle
\left( r_{u},r_{v}\right) $ for the directed angle from a ray $r_{u}$ to the
ray $r_{v}$ in the range $(-180%
{{}^\circ}%
,180%
{{}^\circ}%
],$ positive in the counterclockwise sense and negative in the clockwise
sense.

\subparagraph*{Tilt.}

For each index $j,$ $0\leq j\leq $ $J+1,$\ the \emph{tilt}\ $\tau _{j}$ is
defined to be 
\begin{equation*}
\tau _{j}=\left\{ 
\begin{tabular}{ll}
$\measuredangle \left( \xi ,HX\right) $ & \quad $j=0,$ \\ 
$\measuredangle \left( \xi ,B_{j}\right) $ & \quad $j=1,2,\dots ,J,$ \\ 
$\measuredangle \left( \xi \mathfrak{,}YT\right) $ & \quad $j=J+1,$%
\end{tabular}%
\right.
\end{equation*}%
where $\xi $ is the axis, oriented from $H$ to $T.$ \ (The situation is
illustrated in \figref{fig4}, in which the terminal side of
each tilt angle $\measuredangle \left( \xi \mathfrak{,\tau }_{j}\right) $ is
marked $\tau _{j}$.) \ The tilt of $\mathfrak{L}_{j}$ measures the directed
angle of its base with respect to the horizontal axis $\xi $.

\figstwo{fig4}{$\ J=5.$}{0.6}{$J=6.$}{0.6}{The tilt angle.}

\begin{lemma}
\label{monotone}Initial inequalities for $\tau _{0},$ $\tau _{1},$ and $\tau
_{J+1}$ are: $0%
{{}^\circ}%
<\tau _{0}<180%
{{}^\circ}%
,\ -180<\tau _{1}<0%
{{}^\circ}%
,$ and%
\begin{equation*}
\left\{ 
\begin{tabular}{ll}
$0%
{{}^\circ}%
<\tau _{J+1}<180$ & \quad when $J$ is even,\medskip \\ 
$-180%
{{}^\circ}%
<\tau _{J+1}<0%
{{}^\circ}%
$ & \quad when $J$ is odd.%
\end{tabular}%
\right.
\end{equation*}%
The tilts satisfy the monotonicity inequalities 
\begin{equation*}
\begin{tabular}{l}
$\tau _{0}>\tau _{2}>\tau _{4}>\cdots >\left\{ 
\begin{tabular}{ll}
$\tau _{_{J}}$ & \quad if $J$ is even, \\ 
$\tau _{_{J+1}}$ & \quad if $J$ is odd,%
\end{tabular}%
\right. $ \\ 
\thinspace \\ 
$\tau _{1}\,<\tau _{3}<\tau _{5}<\cdots <\left\{ 
\begin{tabular}{ll}
$\tau _{_{J+1}}$ & \quad if $J$ is even, \\ 
$\tau _{_{J}}$ & \quad if $J$ is odd.%
\end{tabular}%
\right. $%
\end{tabular}%
\end{equation*}
\end{lemma}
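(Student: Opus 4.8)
The plan is to reduce the whole lemma to the convexity of the two boundary arcs of $\mathfrak{K}$, measured against the fixed reference direction $\xi$. Write the vertices of the upper boundary, in their cyclic order on $\partial\mathfrak{K}$, as $H=a_{0},a_{1},\dots ,a_{p}=T$, and those of the lower boundary as $H=b_{0},b_{1},\dots ,b_{q}=T$; since $\xi$ meets $\mathrm{int}(\mathfrak{K})$, every $a_{i}$ with $0<i<p$ lies strictly above the line $HT$ and every $b_{i}$ with $0<i<q$ strictly below it. First I would record a purely convex-geometric fact: if each edge of $\partial\mathfrak{K}$ is oriented from $H$ toward $T$ (as stipulated) and we record its directed angle $\measuredangle(\xi,\cdot)$, then these angles are \emph{strictly decreasing} as one runs along the upper arc from $H$ to $T$ and \emph{strictly increasing} as one runs along the lower arc from $H$ to $T$. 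Indeed, traversing the upper arc from $H$ to $T$ rotates the edge direction clockwise and the lower arc counterclockwise, in each case through a total of less than $360^{\circ}$; in neither case does the direction sweep past $-\xi$, so there is no wrap-around in the convention $(-180^{\circ},180^{\circ}]$, and strictness holds because each $a_{i}$ and $b_{i}$ is a genuine vertex (strictly positive exterior angle).

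Next I would unwind the definitions of the tilts. From the alternating structure of the locales, the bases with odd index $B_{1},B_{3},B_{5},\dots$ are edges of the lower arc and those with even index $B_{2},B_{4},\dots$ are edges of the upper arc; and because the locales are indexed from $H$ to $T$, the lower arc is cut, in order from $H$, into $B_{1},C_{2},B_{3},C_{4},\dots$ and the upper arc into $C_{1},B_{2},C_{3},B_{4},\dots$. Thus $B_{1},B_{3},B_{5},\dots$ occur in this order along the lower arc, and $B_{2},B_{4},\dots$ in this order along the upper arc. Moreover $w_{2}=X$ is the first vertex the guide path reaches on the upper arc after $H$: here I would invoke simplicity of the guide path, since a vertex lying between two already-visited vertices of a boundary arc cannot be reached afterward without a self-crossing, so $\varpi$ meets the $a_{i}$ in their convex order and likewise the $b_{i}$; hence $HX=[H,a_{1}]$ is the very first edge of the upper arc, lying strictly before every $B_{2k}$. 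Symmetrically, the last link $[Y,T]=[w_{n-1},w_{n}]$ of $\varpi$ is the last edge of the cap $C_{J}$, which lies on the upper arc when $J$ is odd and on the lower arc when $J$ is even; so $[Y,T]$ is the last edge of the upper arc (resp.\ of the lower arc) and lies strictly after every $B_{2k}$ (resp.\ every $B_{2k-1}$) on that arc. Feeding these arc-orderings into the monotonicity of the previous paragraph yields $\tau_{0}>\tau_{2}>\tau_{4}>\cdots$ and $\tau_{1}<\tau_{3}<\tau_{5}<\cdots$, each chain continued by $\tau_{J+1}$ exactly when the parity of $J$ places the last cap $C_{J}$ on the corresponding arc, which is precisely the case distinction in the statement.

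Finally, the three initial inequalities are read off from which open half-plane (relative to $HT$) the relevant oriented segment points into. The segment $HX$ points into the upper half-plane because $X=a_{1}$ is strictly above $HT$, so $0^{\circ}<\tau_{0}<180^{\circ}$; the first lower edge $B_{1}=[H,b_{1}]$ points into the lower half-plane, so $-180^{\circ}<\tau_{1}<0^{\circ}$; and the last edge $[Y,T]$ runs from a vertex strictly off the line $HT$ to $T\in HT$, pointing downward into the lower half-plane when that vertex is on the upper arc (the case $J$ odd, giving $-180^{\circ}<\tau_{J+1}<0^{\circ}$) and upward into the upper half-plane when it is on the lower arc (the case $J$ even, giving $0^{\circ}<\tau_{J+1}<180^{\circ}$).

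I expect the only delicate part to be the middle step: extracting rigorously, from simplicity of the guide path together with the locale bookkeeping, that $X$ is genuinely the first vertex of the upper arc, that $[Y,T]$ is genuinely the last edge of $C_{J}$ (in particular that $C_{1}$ and $C_{J}$ are never single points, so these end-links really are boundary edges), and that the bases really do appear in arc order — and, above all, keeping the parity of $J$ straight so that $\tau_{J+1}$ is appended to the correct one of the two monotone chains and carries the correct sign. The convexity input and the half-plane sign computations are then routine.
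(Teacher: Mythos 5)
Your proof is correct and takes essentially the same route as the paper, whose proof is a single sentence: the initial inequalities for $\tau_{0}$, $\tau_{1}$, $\tau_{J+1}$ come from the normalization (which side of the axis $HX$, $[H,b_{1}]$, and $[Y,T]$ point into), and the monotonicity comes from the convexity of $\mathfrak{K}$, i.e.\ the monotone rotation of the oriented edge directions along the upper and lower boundary arcs. Your write-up simply supplies the bookkeeping the paper leaves implicit (the bases appearing in arc order, $X$ adjacent to $H$ and $Y$ to $T$ via simplicity of the guide path, and the no-wrap-around check), all of which is sound.
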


\begin{proof}
The inequalities for $\tau _{0}$ and $\tau _{1}$ are the result of the
normalization, and the monotonicity claims are consequences of the convexity
of $\mathfrak{K.}$ \ \bigskip
\end{proof}

\subparagraph*{Span.}

The \emph{span} $\delta _{j}$ of the interior locale $\mathfrak{L}_{j}$ is
defined in terms of tilts, as follows. \ For $j=1,2,\dots ,J$:%
\begin{equation}
\mathfrak{\delta }_{j}=\left\{ 
\begin{tabular}{ll}
$\tau _{j+1}-\tau _{j-1}=\measuredangle (B_{j-1},B_{j+1})$ & \quad when $j$
is even, \\ 
$\tau _{j+1}-\tau _{j-1}=\measuredangle (B_{j-1},B_{j+1})$ & \quad when $j$
is odd and $j<J,$ \\ 
$\tau _{J+1}-\tau _{J-1}=\measuredangle \left( B_{J-2},B_{J}\right) $ & 
\quad when $J$ is odd.%
\end{tabular}%
\right.  \label{spacing}
\end{equation}%
When $j$ is even, $\delta _{j}>0,$ reflecting the fact that the support
lines of $\mathfrak{K}$ at points of the cap $C_{j}$ turn counterclockwise
across $C_{j}.$ \ Similarly, when $J$ is odd, $\delta _{j}<0,$ and the
support lines of $\mathfrak{K}$ a points of $C_{j}$ turn clockwise across $%
C_{j}$ (see \figref{fig5}). \ 

\figstwo{fig5}{$j$ even.}{0.6}{$j$ odd.}{0.6}{Span of an interior locale.}
\ \ For every ray $r$ between $B_{j-1}$ and $B_{j+1}$ there is exactly one
line parallel to $r$ that touches the cap $C_{j}$ of $\mathfrak{L}_{j}.$ \
Consequently, the span of $\mathfrak{L}_{j}$ is the range of the angles of
supporting lines that contact $\mathfrak{K}$ on the cap $C_{j}.$

\subsection{The schematic diagram.}

To analyze the data for $\pi $, we employ a schematic diagram built in the
square $[0%
{{}^\circ}%
,360%
{{}^\circ}%
]\times \left( -180%
{{}^\circ}%
,180%
{{}^\circ}%
\right) $ in $\mathbb{R}^{2}$. \ The units on each axis are degrees.

Let $\delta =|\delta _{1}|+|\delta _{2}|+\cdots +|\delta _{J}|,$ so that $%
0<\delta \leq 360%
{{}^\circ}%
.$ \ On the interval $\left[ 0,\delta \right] $ on the $x$-axis, mark off
successive intervals of lengths $|\delta _{1}|,$ $|\delta _{2}|,$ $\dots ,$ $%
|\delta _{J}|,$ and locate a point on the lateral edges of each rectangle 
\begin{equation*}
L_{j}=\left\{ 
\begin{tabular}{ll}
$\left\{ \left( x,y\right) :0\leq x\,<\left\vert \delta _{1}\right\vert ,-180{{}^\circ}<y\leq 180{{}^\circ}\right\} $ & \quad $j=1,\smallskip $ \\ 
$\left\{ \left( x,y\right) :\sum\nolimits_{1}^{j-1}\left\vert \delta
_{i}\right\vert \leq x<\sum\nolimits_{1}^{j}\left\vert \delta
_{i}\right\vert ,\text{ }-180{{}^\circ}<y\leq 180{{}^\circ}\right\} $ & \quad $j=2,3,\ldots ,J.$
\end{tabular}%
\right.
\end{equation*}%
by setting%
\begin{equation*}
T_{j}=\left\{ 
\begin{tabular}{ll}
$\left( 0,\tau _{1}\right) $ & \quad $j=1,\smallskip $ \\ 
$\left( \sum\nolimits_{1}^{j-1}\left\vert \delta _{j}\right\vert ,\tau
_{j}\right) $ & \quad $j=2,3,\ldots,J+1.$%
\end{tabular}%
\right. .
\end{equation*}%
The rectangular region $L_{j}$ encodes the angle data for the locale $%
\mathfrak{L}_{j}$: \ its width is the span $\left\vert \delta
_{j}\right\vert $ of $\mathfrak{L}_{j}$ and the point $T_{j}$ on its left
edge gives the tilt of its base.

In each rectangle $L_{j}$ we insert a horizontal line segment whose left
endpoint is $T_{j}.$ \ Recalling that the width $\left\vert \delta
_{j}\right\vert $ is precisely the spacing $\left\vert \tau _{j-1}-\tau
_{j+1}\right\vert $ (\ref{spacing}), join the consecutive even-indexed
points $T_{j}$ with line segments having slope $-1$ and the consecutive
odd-indexed points $T_{j}$ with line segments having slope $+1,$ as pictured
in (\figref{fig6}). \ Then the horizontal segment at $y=\tau
_{j}$ marks the angle of the base of $\mathfrak{L}_{j},$ and the $\pm 45%
{{}^\circ}%
$ segment codes the range $\delta _{j}$ of angles of the support lines that
touch $\mathfrak{L}_{j}$ on its cap $C_{j}.$

The horizontal and slant line segments assemble themselves into two
continuous piecewise linear paths $\Upsilon $ and $\Phi $ that code the
contact angles $\vartheta $ of the support lines $\ell _{\vartheta }$ that
contact $\mathfrak{K}$ on its upper boundary ($\Upsilon $) and on its lower
boundary ($\Phi $), respectively. \ More precisely, define%
\begin{equation}
\begin{tabular}{l}
$\left. 
\begin{tabular}{l}
$\Upsilon =\langle T_{0}T_{2}T_{4}\cdots T_{J}\rangle $ \\ 
$\Phi =\langle T_{1}T_{3}T_{5}\cdots T_{J+1}\rangle $%
\end{tabular}%
\right\} $\quad $J$ even, \\ 
\\ 
$\left. 
\begin{tabular}{l}
$\Upsilon =\langle T_{0}T_{2}T_{4}\cdots T_{J+1}\rangle $ \\ 
$\Phi =\langle T_{1}T_{3}T_{5}\cdots T_{J}\rangle $%
\end{tabular}%
\right\} $\quad $J$ odd.%
\end{tabular}
\label{fcts}
\end{equation}%
Recalling that $\tau _{0}$ is positive and $\tau _{1}$ is negative, and $%
\tau _{J}$ and $\tau _{J+!}$ always have opposite signs, we see that $%
\Upsilon $ is a continuous, decreasing, piecewise linear function on $\left[
0,\delta \right] $ that falls from $T_{0}$ to $T_{J}$ (or to $T_{J+1}$)$,$
and $\Phi $ is a continuous, increasing piecewise linear function on $\left[
0,\delta \right] $ that rises from $T_{1}$ to $T_{J}$ (or to $T_{J+1}$) (%
\figref{fig6}). \ The fact that these two graphs must meet
and cross constitutes the proof of Theorem \ref{mov th}. 

\figstwo{fig6}{Schematic with $J$ even.}{0.6}{An example with $J$ odd.}{0.6}{Schematic diagrams.}

We have established the following:

\begin{lemma}
The piecewise linear functions $\Upsilon $ and $\Phi $ defined by (\ref{fcts}%
) meet and cross at precisely one point $M$ inside a strip $L_{j}$ that
represents precisely one locale $\mathfrak{L}_{j}$, and through $M$ there
passes exactly one horizontal segment and exactly one slant segment.
\end{lemma}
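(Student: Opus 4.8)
The plan is to reduce everything to the single function $g:=\Upsilon-\Phi$ on $[0,\delta]$. First I would record what the construction forces on each strip: on $L_{j}$ one of $\Upsilon,\Phi$ is a horizontal segment (slope $0$) and the other is a slant segment, the slant belonging to $\Upsilon$ with slope $-1$ when $j$ is odd and to $\Phi$ with slope $+1$ when $j$ is even. In either case the two slopes differ by exactly $-1$, so $g$ has slope $-1$ on every strip; since $\Upsilon$ and $\Phi$ are continuous and both defined on all of $[0,\delta]$, the function $g$ is continuous and piecewise linear with every piece of slope $-1$, hence affine: $g(x)=g(0)-x=(\tau_{0}-\tau_{1})-x$.

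Next I would evaluate $g$ at the two ends. At $x=0$, $g(0)=\tau_{0}-\tau_{1}>0$ since $0^{\circ}<\tau_{0}<180^{\circ}$ and $-180^{\circ}<\tau_{1}<0^{\circ}$ by Lemma~\ref{monotone}. At $x=\delta$, $\Upsilon(\delta)$ is the terminal tilt carried by $\Upsilon$ (which is $\tau_{J}$ when $J$ is even and $\tau_{J+1}$ when $J$ is odd) and $\Phi(\delta)$ is the terminal tilt carried by $\Phi$; by Lemma~\ref{monotone} together with the fact that $\tau_{J}$ and $\tau_{J+1}$ always have opposite signs, these two terminal tilts have opposite signs, with $\Upsilon(\delta)<0<\Phi(\delta)$, so $g(\delta)<0$. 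Hence $g$ vanishes at exactly one point $x_{M}=\tau_{0}-\tau_{1}\in(0,\delta)$ and changes sign there, so $\Upsilon$ and $\Phi$ meet, and cross, at the single point $M=(x_{M},\Upsilon(x_{M}))$.

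Then I would locate $M$: since $x_{M}\in(0,\delta)$ and the strips $L_{1},\dots,L_{J}$ are half-open intervals partitioning $[0,\delta)$, $M$ lies in exactly one strip $L_{j}$, hence in exactly one locale $\mathfrak{L}_{j}$. On $L_{j}$ the graphs of $\Upsilon$ and $\Phi$ contribute precisely one horizontal segment and one slant segment, and --- $M$ being interior to that strip --- these are the only segments of $\Upsilon\cup\Phi$ through $M$; that is the last assertion.

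The delicate point is verifying that $x_{M}$ is interior to its strip and not one of the division points $\sum_{i=1}^{k}|\delta_{i}|$. A short telescoping computation, using $|\delta_{i}|=|\tau_{i+1}-\tau_{i-1}|$ and the alternating signs of the $\delta_{i}$, gives $\sum_{i=1}^{k}|\delta_{i}|=(\tau_{0}-\tau_{1})\pm(\tau_{k+1}-\tau_{k})$; thus $x_{M}=\tau_{0}-\tau_{1}$ coincides with a division point only if two consecutive base tilts $\tau_{k},\tau_{k+1}$ are equal, i.e. only if two edges of $\mathfrak{K}$ on opposite boundary arcs are parallel. Disposing of that degeneracy --- by excluding it, or by checking that when it occurs the crossing still belongs to a single locale and still yields the support-line configuration required in Theorem~\ref{mov th} --- is where I expect the real work to lie; the crossing itself and its uniqueness are immediate from the slope-$(-1)$ observation and the intermediate value theorem.
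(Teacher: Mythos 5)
Your argument is essentially the paper's own: the published proof of this lemma is just the remark that the decreasing function $\Upsilon$ starts above and ends below the increasing function $\Phi$, so that the crossing exists and is unique by continuity, which is exactly your intermediate-value step. Your sharper formulation --- that $g=\Upsilon-\Phi$ has slope $-1$ on every strip, hence is affine with $x_{M}=\tau_{0}-\tau_{1}$ --- is correct (the parity bookkeeping of which path is horizontal in $L_{j}$ checks out), and it is in fact the observation the paper itself invokes later, in the proof of Theorem \ref{theorem}, where the graph of $\Delta=\Upsilon-\Phi$ is described as a segment of slope $-1$; so your route buys the uniqueness statement and the quantitative location of $M$ at no extra cost. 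Concerning the point you flag at the end: your telescoping identity is right, so $x_{M}$ lands on a division point exactly when $\tau_{k}=\tau_{k+1}$ for some $k$, i.e.\ when two of the relevant edges on opposite boundary chains are parallel; be aware that the paper's two-line proof does not treat this possibility at all --- the half-open definition of the strips $L_{j}$ silently assigns $M$ to a single strip, and the claim that exactly one horizontal and one slant segment pass through $M$ is simply not argued when $M$ is a common corner of both graphs. So you are not missing an idea that the paper supplies; a fully rigorous version of the lemma (and of the subsequent identification of $u$, $v$, $w$) would indeed require either excluding this parallel-edge degeneracy or handling it separately, as you propose, which is more care than the published argument takes.
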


\begin{proof}
The decreasing function $\Upsilon $ starts at $0$ above and ends at $\delta $
below the increasing function $\Phi ,$ and the result follows from
continuity. \ 
\end{proof}

\subsection{The proof of Theorem \protect\ref{mov th} for polygonal arcs.}

The point $M$ in which the paths $\Upsilon $ and $\Phi $ meet and cross lies
in exactly one rectangle $L_{j}$ representing precisely one locale $%
\mathfrak{L}_{j},$ and through $M$ pass exactly one horizontal segment and
exactly one $\pm 45%
{{}^\circ}%
$ segment. \ Let $m$ be the (extended) base $B_{j}$ of $\mathfrak{L}_{j}.$ \
There is exactly one line $n$ parallel to $m$ and touches $\mathfrak{K}$ on
the cap $C_{j},$ say at the point $v.$ \ Let $u$ and $w$ be the endpoints of
the base segment $B_{j}$ of $\mathfrak{L}_{j}.$ \ The conditions of Theorem %
\ref{mov th} are satisfied by the lines $m$ and $n$ and the vertices $u,$ $%
v, $ and $w$; and the proof for simple, open, polygonal arcs is completed.

\subparagraph*{Exemplar.}

The drawings involving the exemplar $\pi _{0}$ were made using a CAD
program, which provided the angular data assembled in \textsc{Table} \ref%
{data}.\ 
\begin{table}[htb] \centering%
\begin{tabular}{c}
\begin{tabular}{|c|c|}
\hline
Tilts & Spans \\ \hline
\multicolumn{1}{|l|}{%
\begin{tabular}{l}
$\tau _{0}=73.76%
{{}^\circ}%
$ \\ 
$\tau _{2}=25.70%
{{}^\circ}%
$ \\ 
$\tau _{4}=-23.90%
{{}^\circ}%
$ \\ 
$\tau _{6}=-61.01%
{{}^\circ}%
$ \\ 
$\tau _{1}=-69,98%
{{}^\circ}%
$ \\ 
$\tau _{3}=6.34%
{{}^\circ}%
$ \\ 
$\tau _{5}=25.07%
{{}^\circ}%
$ \\ 
$\tau _{7}=66.82%
{{}^\circ}%
$%
\end{tabular}%
} & \multicolumn{1}{|l|}{%
\begin{tabular}{l}
$\delta _{1}=-48.06%
{{}^\circ}%
$ \\ 
$\delta _{3}=-49.60%
{{}^\circ}%
$ \\ 
$\delta _{5}=-37.10%
{{}^\circ}%
$ \\ 
\\ 
$\delta _{2}=76.32%
{{}^\circ}%
$ \\ 
$\delta _{4}=18.73%
{{}^\circ}%
$ \\ 
$\delta _{6}=41.75%
{{}^\circ}%
$%
\end{tabular}%
} \\ \hline
\end{tabular}%
\medskip%
\end{tabular}%
\caption{Data for the examplar $\pi_{0}$.}\label{data}%
\end{table}%

\figref{fig6}(a) shows the schematic diagram for $\pi _{0}$
drawn from the angle data in \textsc{Table} \ref{data}. \ The crossing point 
$M$ falls in locale $\mathfrak{L}_{3},$ the line $m$ passes through the
vertices $u=w_{7}$ and $w=w_{11}$ and the point $v$ is the vertex $w_{9}$ (\figref{fig7}).

\figs{fig7}{Support lines for $\protect\pi _{0}.$}{0.22}

\subsection{The extension to rectifiable arcs.}

Classical approximation methods (see, for example, \cite{Mov-Wet 2011} and 
\cite{WW}) can be used to extend the proof for open simple polygonal arcs to
open simple rectifiable arcs, but a little care must be taken to avoid the
confluence of the three points of contact, in which case one can show that
the original curve must have been a straight segment. \ We leave the details
to the interested reader.

\subsection{Closed simple arcs.}

We say that a simple \emph{closed} rectifiable arc $\gamma $ has the $%
\Lambda $ \emph{Property }if it has two parallel support lines $m$ and $n$
with three different points $u,$ $v,$ and $w$ of $\gamma $ \ with $u$ and $w$
on $m$ and $v$ on $n.$ \ Recall that a convex set with non-empty interior is
called \emph{strictly convex} if its boundary contains no straight line
segments (Valentine \cite[p. 94]{val}). \ In this regard we have the
following general theorem:

\figs{fig8}{Closed $\Lambda $ property.}{0.35}

\begin{theorem}
A simple rectifiable arc $\gamma $ has the $\Lambda $ property if and only
if its convex hull is \emph{not} strictly convex.
\end{theorem}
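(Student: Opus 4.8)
The plan is to reduce the statement to two standard facts about the convex hull $\mathfrak{K}$ of the compact set $\gamma$ in the plane. First, every extreme point of $\mathfrak{K}$ already lies in $\gamma$; this is Milman's partial converse to the Krein--Milman theorem, and in the plane it follows at once from Carath\'eodory's theorem, since a point written as a convex combination of at most three points of $\gamma$ can be extreme in $\mathfrak{K}$ only if it coincides with one of them. Second, a support line $m$ of $\gamma$ is the same thing as a support line of $\mathfrak{K}$, and the intersection $\mathfrak{K}\cap m$ is a face of $\mathfrak{K}$ — a possibly degenerate closed segment $[u,w]\subseteq\partial\mathfrak{K}$ whose endpoints $u$ and $w$ are extreme points of $\mathfrak{K}$. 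I shall treat a simple \emph{closed} rectifiable arc, the case for which the $\Lambda$ property was just defined; since a simple closed curve is not contained in any line (no subset of a line is homeomorphic to a circle), $\mathfrak{K}$ has nonempty interior. The argument below also covers an open arc, once one knows its convex hull is two-dimensional, i.e.\ that the arc is not a segment.

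For the implication that the $\Lambda$ property forces non-strict convexity, suppose $\gamma$ has parallel support lines $m$ and $n$ with distinct points $u,w\in\gamma\cap m$ and a point $v\in\gamma\cap n$. Since $m$ supports $\mathfrak{K}$, the face $\mathfrak{K}\cap m$ lies in $\partial\mathfrak{K}$ and, being convex, contains the nondegenerate segment $[u,w]$. Hence $\partial\mathfrak{K}$ contains a segment and $\mathfrak{K}$ is not strictly convex. (Only the two points on the single line $m$ enter here.)

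For the converse, suppose $\mathfrak{K}$ is not strictly convex, so $\partial\mathfrak{K}$ contains a nondegenerate segment; let $m$ be the line carrying it. The midpoint of that segment is a boundary point of $\mathfrak{K}$, so some line supports $\mathfrak{K}$ there, and convexity forces that line to be $m$; thus $m$ supports $\mathfrak{K}$, the face $\mathfrak{K}\cap m=[u,w]$ is a nondegenerate segment of $\partial\mathfrak{K}$, and $u\ne w$ are extreme points of $\mathfrak{K}$, hence $u,w\in\gamma$. Let $n$ be the support line of $\mathfrak{K}$ parallel to $m$ on the opposite side of $\mathfrak{K}$. Because $\mathfrak{K}$ has nonempty interior, $n\ne m$, so $n\cap m=\emptyset$. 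The set $\mathfrak{K}\cap n$ is a nonempty compact convex set, hence has an extreme point $v$, which is also extreme in $\mathfrak{K}$; thus $v\in\gamma$, and $v\notin m$ since $v\in n$. Therefore $u$, $v$, $w$ are three distinct points of $\gamma$ with $u$ and $w$ on $m$ and $v$ on $n$, and $\gamma$ has the $\Lambda$ property.

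The two convexity facts are the routine part; the only point requiring care is guaranteeing that $u$, $v$, $w$ are genuinely distinct, and this is precisely where the two-dimensionality of $\mathfrak{K}$ — equivalently, the hypothesis that $\gamma$ is a closed simple arc — is used. The result also dovetails with Theorem~\ref{mov th}: an open simple arc that is not a segment can never have a strictly convex hull, for otherwise $\partial\mathfrak{K}$ would be a simple closed curve every point of which is extreme in $\mathfrak{K}$ and hence belongs to $\gamma$, which is impossible for an injective continuous image of $[0,1]$. So the present theorem is consistent with, and complements, the earlier one.
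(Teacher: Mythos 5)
Your proof is correct, and its skeleton is the one the paper uses: a pair of points of $\gamma$ on a single support line forces a segment in $\partial\mathfrak{K}$, and conversely a segment in $\partial\mathfrak{K}$ yields the line $m$ through it together with the parallel support line $n$ on the far side. Where you genuinely diverge is in how you certify that the three points can be taken \emph{on the arc itself}: you invoke the Milman/Carath\'eodory fact that every extreme point of the hull of a compact set already lies in the set, take the full exposed face $\mathfrak{K}\cap m=[u,w]$ so that $u,w$ are extreme (hence on $\gamma$), and take $v$ to be an extreme point of the face $\mathfrak{K}\cap n$, with two-dimensionality of $\mathfrak{K}$ giving $n\neq m$ and hence distinctness of $u,v,w$. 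The paper's proof is terser and simply picks $u,w$ spanning a boundary segment and $v$ a contact point of $n$ with $\partial\mathfrak{K}$, leaving tacit the verification that these points belong to $\gamma$ — a point that matters for closed curves, since $\partial\mathfrak{K}$ need not be contained in $\gamma$. So your extreme-point lemma buys the rigor the published argument elides, at the cost of importing a (standard) convexity fact; your closing observation that an open non-segment arc can never have a strictly convex hull (a circle cannot embed in an arc) is also a nice complement that the paper only implies via Theorem~\ref{mov th}.
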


\begin{proof}
If $\gamma $ is open, then it has $\Lambda $ property, and the line segment $%
[u,w]$ lies on the boundary of its convex hull. \ Conversely if the line
segment $[u,w]$ lies in the boundary of its convex hull $\mathfrak{K}$, then
the support line $n$ parallel to $m=\overleftrightarrow{uw}$ meets $%
\mathfrak{K}$ at a point $v,$ and the conditions are satisfied.

Suppose $\gamma $ is closed, and let $\mathfrak{K}$ be its convex hull. \ If 
$\gamma $ has the Movshovich property, then again the line segment $[u,w]$
lies in the boundary $\partial \mathfrak{K}$ of the convex hull, so $%
\mathfrak{K}$ is not strictly convex. \ Finally, if $\mathfrak{K}$ is not
strictly convex, there are points $u$ and $w$ so that the line segment $%
[u,w] $ lies in $\partial \mathfrak{K.}$ \ The line $m=\overleftrightarrow{uw%
}$ is a support line of $\gamma ;$ let $n$ be the support line parallel to $%
m,$ and suppose that $n$ meets $\partial \mathfrak{K}$ at a point $v.$ \
Then $m, $ $n,$ $u,$ $v,$ and $w$ have the desired properties. \ \bigskip
\end{proof}

The uniqueness assertion may fail for simple closed curves, convex or not.

\section{The general case.}

Similar results hold for support lines required to be at prescribed angles.
\ As above we begin with an open, simple, polygonal arc $\pi $ and a given
positive angle $\varphi ,$ and we seek support lines $m$ \ and $n$ and
points $u,$ $v,$ $w$ on $\pi $ so that $\left\vert \measuredangle \left(
m,n\right) \right\vert =\varphi ,$ the points $u$ and $w$ lie on $m,$ the
point $v$ lies on $n,$ and $u,$ $v,$ and $w$ fall in the order $u$-$v$-$w$
in the parametric ordering of $\pi .$ \ Here is the precise result.

\begin{theorem}
\label{theorem}\emph{An open simple, polygonal arc }$\pi $\emph{\ (not a
line segment) and an angle }$\varphi \geq 0$\emph{\ are given. \ Let }$K$%
\emph{\ and }$\varpi $\emph{\ be the convex hull and guide path of }$\pi .$%
\emph{\ \ Partition }$K$\emph{\ into its locales }$\mathcal{L}_{1},$\emph{\ }%
$\mathcal{L}_{2},$\emph{\ }$\dots $\emph{\ }$\mathcal{L}_{J}$\emph{, so that
the tilts }$\tau _{j}$\emph{\ are known. \ Let}%
\begin{align*}
\varphi _{L}& =\tau _{0}-\tau _{1}, \\
\varphi _{R}& =\left\{ 
\begin{tabular}{ll}
$\tau _{J}-\tau _{J+1}$ & \quad $J$ even, \\ 
$\tau _{J+1}-\tau _{J}$ & \quad $J$ odd.%
\end{tabular}%
\right.
\end{align*}%
\emph{Then }$\varphi _{L}>0$\emph{\ and }$\varphi _{R}<0;$\emph{\ we call
the key angles }$\varphi _{L}$ and $\varphi _{R}$ \emph{the left and right} 
\emph{aspect angles} of $\pi ,$ \emph{respectively.}
\end{theorem}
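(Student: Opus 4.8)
The plan is to notice that, names aside, Theorem~\ref{theorem} asserts only the two strict inequalities $\varphi_L>0$ and $\varphi_R<0$, and that both are essentially read off from Lemma~\ref{monotone}. For the left aspect angle there is nothing to do beyond quoting that lemma: it gives (in degrees) $0<\tau_0<180$ and $-180<\tau_1<0$, so $\varphi_L=\tau_0-\tau_1$ is a positive quantity minus a negative one, whence $\varphi_L>0$; in fact $0<\varphi_L<360$. The prescribed angle $\varphi\ge 0$ is not used in this particular statement (it enters only in the comparisons made afterward).

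For the right aspect angle I would split on the parity of $J$, arranging in each case that $\varphi_R$ has the shape ``(negative tilt) minus (positive tilt).'' Lemma~\ref{monotone} already records the sign of $\tau_{J+1}$: positive when $J$ is even, negative when $J$ is odd. The one extra ingredient needed---already used implicitly when the schematic paths $\Upsilon$ and $\Phi$ were built---is that $\tau_J$ and $\tau_{J+1}$ have opposite signs, i.e., that the base $B_J$ of the last locale $\mathfrak{L}_J$ and the terminal link $YT$ of the guide path $\varpi$ lie on opposite sides of the axis $\xi$. This is the mirror, at the tail $T$, of the normalization that fixed the signs of $\tau_0$ and $\tau_1$ at the head $H$: the base $B_J$ is an edge link lying on whichever boundary arc of $\mathfrak{K}$ is opposite the cap $C_J$, while $YT$ reaches $T$ on the cap side, and the alternation rule for the caps and bases of the locales, together with the convexity of $\mathfrak{K}$, identifies that arc in terms of the parity of $J$. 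Granting this, when $J$ is even one has $\tau_J<0<\tau_{J+1}$, so $\varphi_R=\tau_J-\tau_{J+1}<0$; when $J$ is odd one has $\tau_{J+1}<0<\tau_J$, so $\varphi_R=\tau_{J+1}-\tau_J<0$. Either way $-360<\varphi_R<0$, which is the assertion, and the terminology ``left'' and ``right aspect angle'' is then attached by definition.

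The step I expect to be the real obstacle is precisely the bookkeeping behind ``$\tau_J$ and $\tau_{J+1}$ have opposite signs'': one must follow the cap/base alternation all the way out to the last locale $\mathfrak{L}_J$, whose structure (as the text already flags) depends on the parity of $J$, and verify that $B_J$ and the terminal link $YT$ straddle the axis just as the first base $B_1$ and the initial link $HX$ do at the head. Once that is in place, everything else in the theorem is immediate from Lemma~\ref{monotone}.
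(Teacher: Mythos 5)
Your reading of the printed statement is fair, and your argument for it is correct: $\varphi_L=\tau_0-\tau_1>0$ is immediate from the normalization recorded in Lemma \ref{monotone} ($0<\tau_0<180^\circ$, $-180^\circ<\tau_1<0$), and $\varphi_R<0$ follows in both parities once one knows that $\tau_J$ and $\tau_{J+1}$ have opposite signs. That last fact is exactly the extra ingredient, and you are right that Lemma \ref{monotone} alone does not give the sign of $\tau_J$; note, however, that the paper itself never proves it either --- it is simply asserted in the schematic-diagram subsection (``$\tau_J$ and $\tau_{J+1}$ always have opposite signs''), so your ``granting this'' plus the tail-of-$\mathfrak{K}$ sketch (base $B_J$ and terminal link $YT$ approach $T$ from opposite sides of the axis $\xi$, mirroring the normalization at $H$) is at the same level of rigor as the source. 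In that sense your route for the sign claims coincides with the paper's implicit one.

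The point of comparison you should be aware of is one of scope: in the paper, Theorem \ref{theorem} is really the package consisting of the aspect angles together with the enumerated conclusions A--D (how many support-line pairs at prescribed angle $\varphi$ have the $\Lambda$ property), and the paper's printed proof is devoted entirely to those parts. It augments the schematic diagram with the difference function $\Delta=\Upsilon-\Phi$, observes that $\Delta$ is a segment of slope $-1$ running from $(0,\varphi_L)$ to $(\delta,\varphi_R)$, and counts intersections of $\Delta$ with the horizontal lines at ordinates $\pm\varphi$; the inequalities $\varphi_L>0$, $\varphi_R<0$ are precisely what guarantee that this segment crosses from positive to negative values, so they are the input to, not the content of, that argument. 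Your proposal does not engage with A--D at all, so if the theorem is read as including them (as the paper's own proof does), that counting argument is the missing piece; if it is read literally as the displayed assertions, your proof is complete modulo the opposite-sign bookkeeping you flagged.
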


\begin{description}
\item[A.] \emph{If }$\varphi =0,$\emph{\ then there is exactly one pair }$%
(m,n)$\emph{\ of parallel support lines of }$\mathfrak{K}$\emph{\ that has
the } $\Lambda$ \emph{property.}

\item[B.] \emph{If }$0<\varphi \leq \min \left\{ \varphi _{L},\left\vert
\varphi _{R}\right\vert \right\} ,$\emph{\ then there are exactly two pairs }%
$\left( m,n\right) $\emph{\ of support lines of }$\mathfrak{K}$\emph{\ with }%
$\left\vert \measuredangle \left( m,n\right) \right\vert =\varphi $\emph{\
having the }$\Lambda$\emph{\ property, and for one the point }$m\cap n$\emph{%
\ lies to the left of }$H$\emph{\ and for the other the point }$m\cap n$ 
\emph{lies to the right of }$H$\emph{.}

\item[C.] \emph{If }$\min \left\{ \varphi _{L},\left\vert \varphi
_{R}\right\vert \right\} <\varphi \leq \max \left\{ \varphi _{L},\left\vert
\varphi _{R}\right\vert \right\} ,$\emph{\ then there is exactly one pair }$%
\left( m,n\right) $\emph{\ of support lines of }$\mathfrak{K}$\emph{\ with }$%
\left\vert \measuredangle \left( m,n\right) \right\vert =\varphi $\emph{\
having the }$\Lambda $ \emph{property.}

\item[D.] \emph{If }$\max \left\{ \varphi _{L},\left\vert \varphi
_{R}\right\vert \right\} <\varphi ,$\emph{\ then there are no pairs }$\left(
m,n\right) $\emph{\ of support lines that satisfy the desired conditions.}
\end{description}

\begin{proof}
Part (A) is Theorem \ref{mov th}. \ The remaining parts follow immediately
from a modification of the schematic diagram (\figref{fig6}). \ We insert the graph of the difference function $\Delta =\Upsilon -\Phi $
into that diagram to form an extended schematic diagram (\figref{fig9}). \ \ 
\figstwo{fig9}{Schematic diagram for $\pi _{0}$.}{0.6}{The example with $J$ odd.}{0.6}{Augmented schematic diagrams.}
The graph of $\Delta $ is a line segment with slope $-1$ and endpoints $(0%
{{}^\circ}%
,\vartheta _{L})$ and $(\delta ,\vartheta _{R}).$ \ If the line at ordinate $%
\varphi $ meets the graph of $\Delta $ at a point in a rectangle $L_{j},$
then the extended base $B_{j}$ of the associated locale $\mathfrak{L}_{j}$
serves as the support line $m$ and the endpoints $u$ and $w$ of $B_{j}$ as
two of the three desired points; and the second support line $n$ can be
found making the desired angle $\varphi $ with $m$ and contacting the locale 
$\mathfrak{L}_{j}$ at a point $v$ of its cap $C_{j},$ and evidently $u$-$v$-$%
w.$ The three remaining cases of the Theorem reflect the fact that the
horizontal lines at ordinates $\pm \varphi $ meet the graph of $\Delta $ at
a total of two, one, or no points. \ \bigskip
\end{proof}

\subparagraph*{Exemplar.}

The aspect angles of the exemplar $\pi _{0}$ are%
\begin{align*}
\varphi _{L}& =\tau _{0}-\tau _{1}=143.74{{}^\circ}, \\
\left\vert \varphi _{R}\right\vert & =\left\vert \tau _{6}-\tau
_{7}\right\vert =127.83{{}^\circ}.
\end{align*}%
Hence for any angle $\varphi $ with $0<\left\vert \varphi \right\vert \leq $ 
$127.833{{}^\circ}$ there are exactly two pairs of support lines $\left( m,n\right) $ that
satisfy the Movshovich conditions, and for any angle $\varphi $ with $127.833%
{{}^\circ}%
<\varphi \leq $ $143.743%
{{}^\circ}%
$ there is exactly one such pair of support lines. \ For example, for $%
\varphi =40%
{{}^\circ}%
$ there are two support pairs for $\pi _{0}$, found using the dashed lines
at ordinates $\pm 40%
{{}^\circ}%
$ in \figref{fig9}(a); and for $\varphi =135%
{{}^\circ}%
$ there is just one pair of such support lines, found from the lighter
dashed line at ordinate $-135%
{{}^\circ}%
$ in this figure. \ These support lines are pictured in \figref{fig10}.
\figs{fig10}{Support lines at 40${{}^\circ}$ and 135${{}^\circ}.$}{1}

\textbf{Remark.} \ In 1970 one of us conjectured that a 30$%
{{}^\circ}%
$ circular sector having unit radius is a cover for the family $\mathcal{F}$
of all arcs of unit length (see \cite[p. 358]{Wet}), i.e., that this
circular sector contains an isometric copy of every such arc. \ This
conjecture has not been resolved, although there have many partial results,
mostly unpublished. \ If this sector is such a cover, it would be the
smallest convex cover for $\mathcal{F}$ currently known. \ One reason for
our interest in Theorem \ref{theorem} is the hope that it might cast some
light on this conjecture.

J. Ralph Alexander

John E. Wetzel

Department of Mathematics

University of Ilinois at Urbana-Champaign

1409 West Green Street

Urbana, IL 61801

\qquad \emph{johnralph.alexander@gmail.com}

\qquad \emph{jewetz@comcast.net\bigskip }

Wacharin Wichiramala

Department of Mathematics and Computer Science

Faculty of Science

Chulalongkorn University

Bangkok 10330, Thailand

\qquad \emph{wacharin.w@chula.ac.th}


\begin{thebibliography}{9}
\bibitem{Coulton Movshovish 2006} P. Coulton and Y. Movshovich,
\textquotedblleft Besicovich triangles cover unit arcs,\textquotedblright\ 
\emph{Geom Dedicata} 123\textbf{\ }(2006) 79--88.

\bibitem{Mov-Wet 2011} Y. Movshovich and J. Wetzel, \textquotedblleft Escape
paths of Besicovich triangles,\textquotedblright\ \emph{J. Comb. }2 (2011)
413--433.

\bibitem{val} F. A. Valentine, \emph{Convex Sets,} McGraw-Hill, 1964.

\bibitem{Wet} J. Wetzel, \textquotedblleft Fits and
covers,\textquotedblright\ \emph{Math. Mag.,} 76 (2003), 349-363.

\bibitem{WW} J. Wetzel and W. Wichiramala, \textquotedblleft A covering
theorem for families of sets in $\QTR{sl}{R}^{d},$\textquotedblright\ \emph{%
J. Combin.} 1 (2010) 69-76.\bigskip
\end{thebibliography}
\end{document}